\DeclareSymbolFont{usualmathcal}{OMS}{cmsy}{m}{n}
\DeclareSymbolFontAlphabet{\mathcal}{usualmathcal}
\newcommand{\BE}{\begin{equation} \begin{array}{c}}
\newcommand{\EE}{\end{array}\end{equation}}
\newcommand{\BQA}{\begin{equation} \begin{aligned}}
\newcommand{\EQA}{\end{aligned}\end{equation}}
\newcommand{\BT}{\begin{theorem}}
\newcommand{\ET}{\end{theorem}}
\newcommand{\bc}{\begin{center}}
\newcommand{\ec}{\end{center}}
\newcommand{\LX}{\Lambda}
\newcommand{\LXB}{\overline{\LX}}
\newcommand{\lX}{\lambda}
\newcommand{\nc}{\newcommand}
\nc{\one}{\mbox{\bf 1}}
\nc{\invtensor}{\underset{\leftarrow}{\otimes}}
\nc{\const}{\operatorname{const}}
\nc{\ad}{\operatorname{ad}}
\nc{\tr}{\operatorname{tr}}
\nc{\Gr}{\operatorname{Gr}}
\nc{\rGr}{\operatorname{rGr}}
\nc{\atyp}{\operatorname{atyp}}
\nc{\tp}{\operatorname{top}}
\nc{\rank}{\operatorname{rank}}
\nc{\corank}{\operatorname{corank}}
\nc{\codim}{\operatorname{codim}}
\nc{\sdim}{\operatorname{sdim}}
\nc{\mult}{\operatorname{mult}}
\nc{\ext}{\operatorname{ext}}
\nc{\tail}{\operatorname{tail}}
\nc{\howl}{\operatorname{howl}}
\nc{\spn}{\operatorname{span}}
\nc{\Sym}{\operatorname{Sym}}
\nc{\core}{\operatorname{core}}
\nc{\id}{\operatorname{id}}
\nc{\Id}{\operatorname{Id}}
\nc{\Ree}{\operatorname{Re}}
\nc{\hi}{\operatorname{hi}}
\nc{\htt}{\operatorname{ht}}
\nc{\at}{\operatorname{at}}
\nc{\str}{\operatorname{str}}
\nc{\Iso}{\operatorname{Iso}}
\nc{\Ker}{\operatorname{Ker}}
\nc{\rker}{\operatorname{rKer}}
\nc{\im}{\operatorname{Im}}
\nc{\osp}{\mathfrak{osp}}
\nc{\sgn}{\operatorname{sgn}}
\nc{\F}{\operatorname{F}}
\nc{\Mod}{\operatorname{Mod}}
\nc{\DS}{\operatorname{DS}}
\nc{\Soc}{\operatorname{Soc}}
\nc{\Inj}{\operatorname{Inj}}
\nc{\Hom}{\operatorname{Hom}}
\nc{\End}{\operatorname{End}}
\nc{\supp}{\operatorname{supp}}
\nc{\Card}{\operatorname{Card}}
\nc{\Ann}{\operatorname{Ann}}
\nc{\Ind}{\operatorname{Ind}}
\nc{\Coind}{\operatorname{Coind}}
\nc{\wt}{\operatorname{wt}}
\nc{\hwt}{\operatorname{wt}}
\nc{\rk}{\operatorname{rank}}
\nc{\ch}{\operatorname{ch}}
\nc{\sch}{\operatorname{sch}}
\nc{\mdim}{\operatorname{mdim}}
\nc{\Stab}{\operatorname{Stab}}
\nc{\Ima}{\operatorname{Im}}
\nc{\Irr}{\operatorname{Irr}}
\nc{\Spec}{\operatorname{Spec}}
\nc{\Res}{\operatorname{Res}}
\nc{\res}{\operatorname{res}}
\nc{\Aut}{\operatorname{Aut}}
\nc{\Ext}{\operatorname{Ext}}
\nc{\Prec}{\operatorname{Prec}}
\nc{\Fract}{\operatorname{Fract}}
\nc{\gr}{\operatorname{gr}}
\nc{\diag}{\operatorname{diag}}
\nc{\deff}{\operatorname{def}}
\nc{\Vect}{\operatorname{Vect}}
\nc{\HC}{\operatorname{HC}}
\nc{\dpth}{\operatorname{dpth}}
\nc{\sw}{\operatorname{sw}}
\nc{\red}{\operatorname{red}}
\nc{\pari}{\operatorname{par}}
\nc{\pos}{\operatorname{pos}}
\nc{\Cl}{\mathcal{C}\ell}
\nc{\wdchi}{\widetilde{\chi}}
\nc{\wdH}{\widetilde{H}}
\nc{\wdN}{\widetilde{N}}
\nc{\wdM}{\widetilde{M}}
\nc{\wdO}{\widetilde{O}}
\nc{\wdR}{\widetilde{R}}
\nc{\wdV}{\widetilde{V}}
\nc{\wdC}{\widetilde{C}}
\nc{\zero}{\operatorname{zero}}
\nc{\nonzero}{\operatorname{nonzero}}
\nc{\Obj}{\operatorname{Obj}}
\nc{\Dglie}{\operatorname{{\mathcal D}glie}}
\nc{\Fin}{\operatorname{{\mathcal F}in}}
\nc{\pr}{\operatorname{pr}}
\nc{\Adm}{\operatorname{\mathcal{A}dm}}
\nc{\Sg}{{\cS(\fg)}}
\nc{\Shg}{{\cS(\fhg)}}
\nc{\Ug}{{\cU(\fg)}}
\nc{\Uhg}{{\cU(\fhg)}}
\nc{\Sh}{{\cS(\fh)}}
\nc{\Uh}{{\cU(\fh)}}
\nc{\Uhh}{{\cU(\fhh)}}
\nc{\Zg}{{{\mathcal{Z}}(\fg)}}
\nc{\Vir}{{\mathcal{V}ir}}
\nc{\NS}{{\mathcal{N}S}}
\nc{\tZg}{{\widetilde{\mathcal Z}({\mathfrak g})}}
\nc{\Zk}{{\mathcal Z}({\mathfrak k})}
\newcommand{\CC}{\mathbb{C}}
\nc{\Up}{{\mathcal U}({\mathfrak p})}
\nc{\Ah}{{\mathcal A}({\mathfrak h})}
\nc{\Ag}{{\mathcal A}({\mathfrak g})}
\nc{\Ap}{{\mathcal A}({\mathfrak p})}
\nc{\Zp}{{\mathcal Z}({\mathfrak p})}
\nc{\cR}{\mathcal R}
\nc{\cS}{\mathcal S}
\nc{\cP}{\mathcal P}
\nc{\cT}{\mathcal{T}}
\nc{\cA}{\mathcal A}
\nc{\cU}{\mathcal U}
\nc{\cZ}{\mathcal Z}
\nc{\cM}{\mathcal M}
\nc{\cL}{\mathcal L}
\nc{\cF}{\mathcal F}
\nc{\fg}{\mathfrak g}
\nc{\cB}{\mathcal{B}}
\nc{\fo}{\mathfrak o}
\nc{\CO}{\mathcal O}
\nc{\CR}{\mathcal R}
\nc{\cK}{\mathcal{K}}
\nc{\cW}{\mathcal{W}}
\nc{\bM}{\mathbf{M}}
\nc{\bL}{\mathbf{L}}
\nc{\bN}{\mathbf{N}}
\nc{\zq}{\mathpzc q}
\nc{\fl}{\mathfrak l}
\nc{\fn}{\mathfrak n}
\nc{\fm}{\mathfrak m}
\nc{\fp}{\mathfrak p}
\nc{\fh}{\mathfrak h}
\nc{\ft}{\mathfrak t}
\nc{\fk}{\mathfrak k}
\nc{\fb}{\mathfrak b}
\nc{\fs}{\mathfrak s}
\nc{\fB}{\mathfrak B}
\nc{\vareps}{\varepsilon}
\nc{\varesp}{\varepsilon}
\nc{\veps}{\varepsilon}
\nc{\fsl}{\mathfrak{sl}}
\nc{\fgl}{\mathfrak{gl}}
\nc{\fso}{\mathfrak{so}}
\nc{\fosp}{\mathfrak{osp}}
\nc{\fsp}{\mathfrak{sp}}
\nc{\fq}{\mathfrak q}
\nc{\fsq}{\mathfrak{sq}}
\nc{\fpsq}{\mathfrak{psq}}
\nc{\fpq}{\mathfrak{pq}}
\nc{\fhg}{\hat{\fg}}
\nc{\fhn}{\hat{\fn}}
\nc{\fhh}{\hat{\fh}}
\nc{\fhb}{\hat{\fb}}
\nc{\hrho}{\hat{\rho}}
\nc{\hsl}{\hat{\fsl}}
\nc{\fpo}{\mathfrak{po}}
\nc{\dirlim}{\underset{\rightarrow}{\lim}\,}
\nc{\nen}{\newenvironment}
\nc{\ol}{\overline}
\nc{\ul}{\underline}
\nc{\ra}{\rightarrow}
\nc{\lra}{\longrightarrow}
\nc{\Lra}{\Longrightarrow}
\nc{\bo}{\bar{1}}
\nc{\Lla}{\Longleftarrow}
\nc{\Llra}{\Longleftrightarrow}
\nc{\thla}{\twoheadleftarrow}
\nc{\lang}{(}
\nc{\rang}{)}
\nc{\hra}{\hookrightarrow}
\nc{\iso}{\overset{\sim}{\lra}}
\nc{\ssubset}{\underset{\not=}{\subset}}
\nc{\vac}{|0\rangle}
\nc{\simka}{{\ \scriptscriptstyle _{{\sim}}^\text{\tiny{k}}\ }}
\nc{\Thm}[1]{Theorem~\ref{#1}}
\nc{\Prop}[1]{Proposition~\ref{#1}}
\nc{\Lem}[1]{Lemma~\ref{#1}}
\nc{\Cor}[1]{Corollary~\ref{#1}}
\nc{\Conj}[1]{Conjecture~\ref{#1}}
\nc{\Claim}[1]{Claim~\ref{#1}}
\nc{\Defn}[1]{Definition~\ref{#1}}
\nc{\Exa}[1]{Example~\ref{#1}}
\nc{\Rem}[1]{Remark~\ref{#1}}
\nc{\Note}[1]{Note~\ref{#1}}
\nc{\Quest}[1]{Question~\ref{#1}}
\nc{\Hyp}[1]{Hypoth\`ese~\ref{#1}}
\begin{document}
\setcounter{section}{0}
\setcounter{tocdepth}{1}
\numberwithin{equation}{section}

\begin{center}{\Large \textbf{
Construction of matryoshka nested indecomposable $N$-replications of Kac-modules of quasi-reductive Lie superalgebras, including the 
$sl(m/n)$,  $osp(2/2n)$ series.\\
  }}\end{center}

\begin{center}
  Jean Thierry-Mieg\textsuperscript{1$\star$},
  Peter Jarvis\textsuperscript{2,3},
  and 
  Jerome Germoni\textsuperscript{4},\\
  with an appendix by Maria Gorelik\textsuperscript{5}

\end{center}

\begin{center}
{\bf 1} NCBI, National Library of Medicine, National Institute of Health, \\
  8600 Rockville Pike, Bethesda MD20894, U.S.A.
\\
{\bf 2} 
School of Natural Sciences (Mathematics and Physics),\\
  University of Tasmania, Private Bag 37,
  Hobart, Tasmania 7001, Australia.\\
{\bf 3} Alexander von Humboldt Fellow.
\\
  {\bf 4} Université Claude Bernard Lyon 1, CNRS UMR 5208,\\
  Institut Camille Jordan, F-69622 Villeurbanne, France.
  \\
    {\bf 5} Department of Mathematics, the Weizmann Institute of Science, Rehovot, Israel.
    \\
    
* mieg@ncbi.nlm.nih.gov, peter.jarvis@utas.edu.au,germoni@math.univ-lyon1.fr,maria.gorelik@weizmann.ac.il
\end{center}

\begin{center}
\today
\end{center}


\definecolor{palegray}{gray}{0.95}
\begin{center}
\colorbox{palegray}{
  \begin{tabular}{rr}
  \begin{minipage}{0.1\textwidth}
  \end{minipage}
  &
  \begin{minipage}{0.85\textwidth}
    \begin{center}
    {\it 34th International Colloquium on Group Theoretical Methods in Physics}\\
    {\it Strasbourg, 18-22 July 2022} \\
    \doi{10.21468/SciPostPhysProc.?}\\
    \end{center}
  \end{minipage}
\end{tabular}
}
\end{center}

\section*{Abstract}
{\bf
  We construct a new class of finite dimensional indecomposable representations of simple superalgebras
  which may explain, in a natural way, the existence of the heavier elementary particles.
  In type I Lie superalgebras sl(m/n) and osp(2/2n) , one of the Dynkin weights labeling
  the finite dimensional irreducible representations is continuous. Taking the derivative, we show how
  to construct 
  indecomposable representations recursively embedding N copies of the
  original irreducible representation, coupled by generalized Cabibbo angles, as observed among
  the three generations of leptons and quarks of the standard model. The construction is then
  generalized in the appendix to quasi-reductive Lie superalgebras.
}


\section{Introduction}
\label{sec:intro}

In Kac's complete classification of the simple Lie superalgebras \cite{kac1975classification,Kac1977characters},
two families contain an even generator $y$ commuting with the even subalgebra, namely the 
$A(m-1,n-1) = sl(m/n),\;m\neq n$ and the 
$C(n+1) = osp(2/2n)$ superalgebras. They admit a single Dynkin diagram with a single odd positive simple root $\beta$ \cite{FSS89} .

The even subalgebra, in the corresponding Chevalley basis, has the structure:
\BE
\label{eqn:1.1}
   [h_i,h_j] = 0\;,\;\;[h_i,e_j] = C_{ij} e_j\;,\;\;[h_i,f_j] = -C_{ij} f_j\;,
\\ 
    \; [ y, h_i] = [y,e_i] = [y,f_i] = 0\;,\;\;\;\;i,j = 1,2,...r
\EE
where $r$, $h_i$, $e_i$, $f_i$ and $C_{ij}$ denote respectively the rank, 
the Cartan commuting generators, 
the raising and the lowering generators associated to the simple roots,
and the Cartan matrix of
the semisimple even Lie subalgebra $sl(m) \oplus sl(n)$, respectively $sp(2n)$,
with rank $r = m + n - 2$, respectively $r = n$. 
The remaining raising (respectively lowering) generators of the even semisimple subalgebra are generated by the
iterated commutators of the $e$ (respectively $f$) generators limited by the 
Serre rule $ad(e_i)(e_j)^{-C_{ij}+1}=0$. 
Finally, the additional even generator $y$,
that physicists often call the hypercharge, centralizes the even subalgebra. 
Even in finite dimensional representations, $y$ is not quantized, and as shown below, this
is the cornerstone of our new construction of nested indecomposable $N$-replications of an arbitrary Kac module
which we propose to call matryoshka representations.

In its odd sector, the superalgebra has $P$ odd raising generators $u_i$ corresponding to
the $P$ positive odd roots $\beta_i$ and $P$ odd lowering generators $v_i$ corresponding to the $-\beta_i$, with
$P=mn$ for $sl(m/n)$, or $P=2n$ for $osp(2/2n)$. In both cases, the $u_i$ 
sit in the irreducible fundamental representation of the even subalgebra. We call $u_1$ the lowest
weight vector of the $u_i$ representation; $u_1$ corresponds to the simple positive odd root $\beta = \beta_1$. 
Reciprocally, we call $v_1$ the highest weight vector of the $v_i$.
For our following analysis, the important relations are
\BE
\label{eqn:1.2}
[y,u_i] = u_i\;,\;\;[y,v_i] = - v_i\;,\;
\\
\{u_i,u_j\}=\{v_i,v_j\}=0,  
\\
\{u_i,v_j\} = d^a_{ij}\,\mu_a + ky \,\delta_{ij}\;,\;\;
\EE
where $d^a_{ij}$ and $k$ are constants $(k \neq 0)$ and the $\mu_a$ span the even generators of type $(h,e,f)$. 
That is: the hypercharge $y$ grades the superalgebra, with eigenvalues $(0,\pm 1)$.
The $u_i$ anticommute with each other. So do the $v_i$.
Finally and most important, the anticommutator of the odd raising 
operator $u_i$ with the odd lowering 
operator $v_i$ corresponding to the opposite odd root
depends linearly on the hypercharge $y$.
In particular, $\{u_1,v_1\} = h_{\beta} = d^a_{11} h_a + ky$, where  $k$ is non zero and
$h_\beta$ is the Cartan generator associated to the odd simple root $\beta$.
See for example the works of Kac \cite{kac1975classification,Kac1977characters} or
the dictionary on superalgebras by Frappat, Sciarrino and Sorba \cite{FSS96} for
details.

\section {Construction of the Kac modules}
\label{sec:kacmodule}

Following Kac \cite{Kac1977characters},
choose a highest weight vector $\LX$ defined as an eigenstate of the Cartan generators $(h_i, y)$,
and annihilated by all the raising generators $(e_i,u_j)$. 
The eigenvalues $a_i$  of the Cartan operators $h_i$ are called the even Dynkin labels.
The eigenvalue $b$ of the Cartan operator $h_{\beta}$ corresponding to the odd simple root is called
the odd Dynkin label:
\BE
\label{eqn:2.1}
h_i \LX = a_i \LX\;,\;\;\{u_1,v_1\} \;\LX = h_{\beta}\LX = b\LX.
\EE

Construct the corresponding Verma module using the free action on
$\LX$ of the lowering generators $(f,v)$ 
modulo the commutation relations of the superalgebra. 
Since the $v$ anticommute, the polynomials in $(f,v)$ acting on $\LX$ are at 
most of degree $P$ in $v$, and hence the Verma module is graded 
by the hypercharge $y$ and contains exactly $P$ layers.

Consider the antisymmetrized product $w^-$ of all the odd lowering generators $(v_i, i=1,2,...,P)$.
The state $\LXB = w^- \LX$ is a highest
weight with respect to the even subalgebra $e_i \LXB = 0$. 
Indeed $e_i$ annihilates $\LX$ and each
term in the Leibniz development of $[e_i,w^-]$ contains a repetition of one
of the $v$ generators, and hence vanishes. 

Let $\rho$ be the half supersum of the even and odd positive roots
\BE
\label{eqn:2.2}
\rho = \rho_0 - \rho_1 = \frac{1}{2} (\sum \alpha^+  - \sum \beta^+)\;.
\EE
Let $w^+$ be the antisymmetrized product of all the $u$ generators. 
As shown by Kac \cite{Kac1977characters},
we have
\BE
\label{eqn:2.3}
w^+\LXB = w^+w^- \LX = \pm \prod_i <\LX + \rho | \beta_i> \LX
\EE
where the product iterates over the $P$ positive odd roots $\beta_i$, the sign
depends on the relative ordering of $w^+$ and $w^-$ and the bilinear form $<|>$
is a symmetrized version of the Cartan metric. 
If this product is non-zero, the Verma module is called typical. 
$\LX$ belongs to the orbit of the $\LXB$ and {\it vice-versa},  hence
they both belong to the same irreducible submodule.
If the scalar product $<\LX + \rho | \beta_i>$ vanishes for one or more odd positive 
root $\beta_i$, the Verma module is no longer irreducible but only
indecomposable since $\LX$ is not in the orbit of $\LXB$.
It is then called atypical of type $i$ and there exists a state
$\omega_i$ with Cartan eigenvalues $\LX_i - \beta_i$ which is a sub highest weight
annihilated by all the even and odd raising operators $(e,u)$.
In the present study, we do not quotient out by this submodule but
preserve the indecomposable Verma module construction because we want to
preserve the continuity in $b$.
Notice that in the $A$ and $C$ superalgebras that we are studying the odd roots are
on the light-cone
of the Cartan root space : $< \beta_i | \beta_i> = 0$.
Therefore, if $\LX$ is atypical $i$, the secondary highest weight 
$\LX - \beta_i$ is also atypical $i$.

As in the Lie algebra case, this Verma module is infinite dimensional,
because of the acceptable iterated action of the even lowering generators $f$.
But as we just discussed, the iterated action of the anticommuting odd lowering operators $v$
saturates at layer $P$.

Let us now recall for completeness the usual procedure to
extract a finite dimensional irreducible module from a Lie algebra Verma module. 
All the states with negative even Dynkin labels 
which are annihilated by the even raising generators
can be quotiented.
For example, given a Chevalley basis $(h,e,f)$ for the Lie algebra $sl(2)$
and a Verma module with highest weight $\LX$, we have
\BE
\label{eqn:2.4}
[h,e] = e\;,\;\;[h,f] = -f\;,\;\;[e,f] = 2 h\;,
\\
 h \LX = a \LX\;,\;\; e\LX = 0\;,
 \EE
hence 
\BE
\label{eqn:2.5}
  h f^n \LX = (a - 2n) f^n \LX\;,\;\;  ef^n \LX =   n(a -n +1) f^{n-1} \LX\;.
\EE
If $a$ is a positive integer, the Verma module can be
quotiented by the orbit of the state $f^{a+1}\LX$,
and  the equivalence classes form 
an irreducible module of finite dimension $a+1$.

Generalizing to a superalgebra, all the even Dynkin labels $a_i$
associated to the Cartan operators $h_i, i = 1,2,...,r$ are
restricted to non negative 
integers. We pass to the  quotient in each even submodule and
define the Kac module as the resulting 
finite dimensional quotient space. The crucial observation is that the identification of the
even sub highest weights $\omega$ requests to solve a set of equations involving the
even Dynkin labels $a_i$, but independent of the odd Dynkin weight $b$, which
remains non-quantized. For example, in $sl(2/1)$, the state $\omega = (a\,fv - (a+1)vf)\LX$
is an even highest weight \cite{TMJJ2022a}.
But please remember that we do not quotient out the atypical submodules.

Note that this procedure does not extend to the type II Lie-Kac superalgebras $B(m,n)$, $D(m,n)$, $F(4)$ and $G(3)$,
because these algebras contain even generators with hypercharge $y= \pm 2$.
For example the generator associated to the lowest weight of the adjoint representation.
Indeed, the  supplementary root of 
the (affine) extended Dynkin diagram is even. Thus, the Kac module is finite dimensional
if and only if its hidden extended Dynkin label is also a non negative
integer. This integrality constraint
involves $b$. So the representations of the type II superalgebras
are finite dimensional only for quantized values of $b$, see
Kac \cite{Kac1977characters} for the original proofs and \cite{TM83,TM85} for examples. 

The remaining even highest weights $\LX^p_{(...)}$ are spread over the 
$P$ layers with hypercharge decreasing from $y$ down to $y - P$. On the zeroth layer,
we have $\LX^0 = \LX$, 
on the first layer we have the $P$ weights $\LX^1_i = \LX - \beta_i$,
on the second layer 
we have the $P(P-1)/2$ weights $\LX^2_{ij} = \LX - \beta_i - \beta_j$, $i \neq j$,
down to the $P^{th}$ layer $\LX^P_{12...P} = \LXB$,
each time excluding the even highest weight vectors with negative even Dynkin labels, 
since they have been quotiented out.
For an explicit construction of the matrices of the indecomposable representations of $sl(2/1)$, 
we refer the reader to our study \cite{TMJJ2022a} and references therein.

To conclude, if the Kac module with highest weight $\LX$ is typical, it is irreducible. 
If it is atypical, it is indecomposable.
In both cases, its even highest weights are the $\LX^p_{(...)}$ and the whole module is given by the
even orbits of the  $\LX^p_{(...)}$ with non negative even Dynkin labels and hypercharge $y-p$.

\section{On the derivative of the odd raising generators}
\label{sec:derivative}

Consider a finite $D$ dimensional Kac module with highest weight $\LX$, typical or atypical,
as described in the previous section. Call $a_i$ the even Dynkin labels and $b$ the odd Dynkin label
and $y$ the eigenvalue of the hypercharge $Y$ acting on the highest weight state.
Notice that $y$ is a linear combination of $b$ and the even Dynkin weights $a_i$.
As shown above, in our Chevalley basis,
the matrices representing the $(e,f,v)$ generators in the Verma module are by construction independent of $b$,
and the matrix representing the hypercharge generator $Y$ can be written as
$Y = y I + \alpha^i h_i$, where $I$ is the identity, $h_i$ the even Cartan generators of the
semi simple even subalgebra (i.e. excluding $Y$) and the $\alpha^i $ are constants independent of $y$.
This remains true in the Kac module because the quotient operations needed to pass to
the finite dimensional submodule does not involve $b$.
Finally, the matrices representing the odd raising generators $u$ are linear in $b$, i.e. in $y$,
because, when we push an odd raising generator $u$ acting from the left through an element of the Kac module,
i.e. through a polynomial in $(f,v)$ acting on $\LX$, we must contract $u$ with one of the $v$ generators
before $u$ touches $\LX$.

Now consider the derivatives $u'_i$ of the odd raising $u_i$ matrices
\BE
\label{eqn:3.1}
u'_i(a) = \partial_y u_i(a,y)\;.
\EE
Using \eqref{eqn:1.2}, we derive the anticommutation relations
\BE
\label{eqn:3.2}
\{u'_i, v_j\} = \partial_y \{u_i,v_j\} = \partial_y (d^a_{ij} \mu_a + k y \delta_{ij}) = k \delta_{ij}\;,
\EE
where the $\mu$ matrices span the even generators $(h,e,f)$, and where $\mu(a)$ and $v(a)$ are independent of $y$.
Another way of seeing the same results is to compute the $\{u_i(a,y),v_j(a)\}$  anticommutator,
divide by $y$ and take the limit when $y$ goes to
infinity.
Since the matrix elements of the even generators are all bounded when
$y$ diverges,
except the hypercharge $Y$ with spectrum $y, y-1, ..., y-P$,
we arrive at the same conclusion: the $\{u',v\}$ anticommutator is proportional
to the identity on the whole Kac module. Many explicit examples of the matrices $u(a,y), u'(a),v(a)$
can be found in our extensive study of $sl(2/1)$ \cite{TMJJ2022a}.

This result holds for the Verma modules, for the typical-irreducible Kac modules
and for the atypical-indecomposable Kac modules of type I superalgebras,
but does not hold for the type II superalgebras or for the 
irreducible atypical modules of the type I superalgebras because
we need continuity in $b$. Indeed, we proved in \cite {JTM2022a} by
a cohomology argument 
that the fundamental atypical triplet of $sl(2/1)$ cannot be doubled.

The procedure does not hold for the simple superalgebras $psl(n/n)$.
It works for $sl(n/n)$, but this superalgebra is not simple because if $m = n$ the $(m/n)$ identity
operator $Y$ is supertraceless and generates an invariant 1-dimensional subalgebra that can be quotiented out.
The resulting simple superalgebra $psl(n/n)$ corresponds to the quantized case $y=0$ and we cannot take the derivative.

\section{Construction of an indecomposable N-replication of a Kac module}
\label{sec:matrio}

Given a finite $D$ dimensional Kac module, typical or atypical-indecomposable, represented by $D \times D$ matrices
$(\mu,y,u,v)$, constructed as above and where $\mu$ collectively denotes the even matrices of type $(h,e,f)$,
consider the doubled matrices of dimension $2D \times 2D$ :
\BE
\label{eqn:4.1}
M = \begin{pmatrix} 
   \mu & 0 \cr
   0 & \mu
\end{pmatrix}
\;,\;\;
Y = \begin{pmatrix} 
   y & I \cr
   0 & y
\end{pmatrix}
\;,\;\;
U = \begin{pmatrix} 
   u & u' \cr
   0 & u
\end{pmatrix}
\;,\;\;
V = \begin{pmatrix} 
   v & 0 \cr
   0 & v
\end{pmatrix}
\;,
\EE
where we used the $D \times D$ matrices $u'$ constructed in the previous section.
By inspection, the matrices $(M,Y,U,V)$ have the same super-commutation relations as the matrices $(\mu,y,u,v)$
and therefore form an indecomposable representation
of the same superalgebra of doubled dimension $2D$. This representation cannot be diagonalized since the
matrix $Y$ representing the hypercharge cannot be diagonalized because of its block Jordan structure.

The block $u'$ can be rescaled via a change of variables
\BE
\label{eqn:4.2}
Q = \begin{pmatrix} 
   \lX & 0\cr
   0 & 1
\end{pmatrix}
\;,\;\;
Q^{-1} = \begin{pmatrix} 
   1/\lX & 0\cr
   0 & 1
\end{pmatrix}
\;,
\\
\\
QUQ^{-1} = \begin{pmatrix} 
   u & \lX u' \cr
   0 & u
\end{pmatrix}
\;,\;\;
QYQ^{-1} = \begin{pmatrix} 
   y & \lX I \cr
   0 & y
\end{pmatrix}
\;.
\EE

Furthermore, we can construct a module of dimension $ND$, for any positive integer $N$ by iterating the
previous construction. By changing variables we can then introduce a complex parameter $\lX$ at each level.
For example, for $N=3$, we can
construct
\BE
\label{eqn:4.3}
M = \begin{pmatrix} 
   \mu & 0 & 0 \cr
   0 & \mu & 0 \cr
   0 & 0 & \mu
\end{pmatrix}
\;,\;\;
Y = \begin{pmatrix} 
   y & I & 0 \cr
   0 & y & I \cr
   0 & 0 & y
\end{pmatrix}
\;,\;\;
\tilde{Q}Y\tilde{Q}^{-1} = \begin{pmatrix} 
   y & \lX_1\,I & 0 \cr
   0 & y & \lX_2\,I \cr
   0 & 0 & y
\end{pmatrix}
\;,
\\
\\
U = \begin{pmatrix} 
   u & u' & 0 \cr
   0 & u & u' \cr
   0 & 0 & u
\end{pmatrix}
\;,\;\;
V = \begin{pmatrix} 
   v & 0 & 0 \cr
   0 & v & 0 \cr
   0 & 0 & v
\end{pmatrix}
\;,\;\;
\tilde{Q}U\tilde{Q}^{-1} = \begin{pmatrix} 
   u & \lX_1 u' & 0 \cr
   0 & u & \lX_2 u' \cr
   0 & 0 & u
\end{pmatrix}
\;.
\EE

\textbf{Matryoshka theorem:}
Given any finite dimensional, typical or atypical, Kac module of a type I simple superalgebra, $A(m/n),\;m\neq n$ or $C(n)$,
using the derivative $u'$ of the odd raising generators with respect to the hypercharge $y$
which centralizes the even subalgebra,
we can construct an indecomposable representation recursively embedding $N$ replications of the original
module.

We propose the name matryoshka because this nested structure strongly resemble the famous Russian dolls.

\section{Conclusion}
\label{sec:conclusion}

Representation theory of Lie algebras and superalgebras involves three increasingly difficult steps:
classification, characters and construction. In Lie algebra theory, we can rely on three major results:
all finite dimensional representations of the semisimple Lie algebras are completely reducible,
their irreducible components are classified by the Dynkin labels of their highest weight state, 
their characters are given by the Weyl formula. Nevertheless, the actual construction
of the matrices, although known in principle, remains challenging.
We only know the matrices in closed analytic form in the case of $sl(2)$.

Finite dimensional simple Lie superalgebras have been classified by Kac \cite{kac1975classification}.
In the present study, we only consider the simple basic classical superalgebras of type 1,
$sl(m/n),\;m\neq n$ and $osp(2,2n)$ which are characterized by
the existence of an even generator, the hypercharge $y$, commuting with the even subalgebra.
As for Lie algebras, their irreducible modules can be
classified by the Dynkin labels of their highest weight and Kac \cite{Kac1977characters} has discovered
in 1977 an elegant generalization of the Weyl formula.

But there are two additional difficulties. First, as found by Kac, the hypercharge $y$
of the finite dimensional modules is not quantized, but for certain discrete values, the Kac module
ceases to be irreducible but becomes indecomposable. One can quotient out one or several
invariant submodules and the Weyl-Kac formula of the irreducible quotient module
is not known in general \cite{JKTM90,KacWakimoto94}. Furthermore, there is a rich zoology of finite dimensional indecomposable modules
which were progressively discovered by Kac \cite{Kac1977characters}, Scheunert \cite{ScheunertNahmRittenberg1977irreps}, Marcu \cite{marcu1980tensor,marcu1980representations}, Su\cite{Su1992}, and others, culminating
in the classification of Germoni \cite{germoni1997representations,germoni1998indecomposable}.
See \cite{TMJJ2022a} for an explicit description of the
indecomposable $sl(2/1)$ modules.

A particular class, first described by Marcu \cite{marcu1980representations}, is of great interest in physics because it has
implications for the standard model of leptons and quarks. These particles are well
described by $sl(2/1)$ irreducible modules graded by chirality
 \cite{ne1979irreducible,fairlie1979higgs,dondi1979supersymmetric,NTM1980,TMN1982,Thierry_Mieg_2021}. However,
 experimentally, they appear as a hierarchy of three quasi identical families, for example
the muon and the tau behave as heavy electrons. This hierarchical structure has no clear explanation
in Lie algebra theory. Furthermore, the three families leak into each other in a subtle way
first described by Cabibbo (C) for the strange quarks and generalized to all three families
by Kobayashi and Maskawa (KM). In a certain technical sense, the axis of the electroweak interactions is not
orthogonal to the axes of the strong interactions, but tilted by small angles, called the CKM angles.
As a result the weak interactions are not truly universal because the heavier quarks leak into the lighter quarks.
Again, this experimental phenomenon has no explanation in Lie algebra theory precisely because all representations are completely reducible.

Marcu found in 1980 \cite{marcu1980representations} that the fundamental $sl(2/1)$ quartet can be duplicated and triplicated in an indecomposable way.
Coquereaux, Haussling, Scheck and coworkers \cite{coquereaux1991elementary,ScheckHaussling1998triangular,haussling1998leptonic}
have proposed in the 90's to interpret these representations as a description of the CKM mechanism.
This raises several questions: is the construction of Marcu limited to three generations, as observed experimentally in the case of the quarks and leptons,
or does there exist indecomposable  modules involving more layers? Is this property specific of $sl(2/1)$, or is it applicable to
other simple Lie-Kac superalgebras?

We have previously partially answered these questions.
In \cite{germoni1997representations,germoni1998indecomposable} the existence of multi generations indecomposable modules is indicated.
In \cite{JTM2022a}, we proved, using cohomology, that any Kac module of a type I superalgebra can be duplicated.
But these were just proofs of existence.

In the present study, using the derivative of the odd generators relative to the hypercharge, we have shown that any Kac module of a
type I Lie-Kac superalgebra $sl(m/n),\;m\neq n$ and $osp(2/2n)$ can be replicated any desired number of times in an indecomposable way.
We have also shown that atypical representations cannot be replicated. We can therefore, in this framework predict the existence of
three species of sterile right neutrinos from the observation of the non-zero PMNS (leptonic CKM) mixing angles.

In the appendix presented below (\ref{sec:appendix}),
we further show that these results are valid for any  Kac module $K(L)$  over a quasi-reductive Lie
superalgebra $\mathfrak{g}$  of type I.
As the reader will notice, the style of this appendix contributed by M.G.
is more general and more abstract.
We hope that this split/joint presentation will appeal
to the wide audience of the G34 conference, equally composed of  mathematicians and physicists.
The main result is that
the "matryoshka N-replication" of the Kac module $K(L)$
has the structure of a module over a Heisenberg superalgebra.

These results are interesting for physics, surprising relative to Lie algebra theory, and very specific
as we actually construct the matrices of these ``matryoshka'' Russian dolls indecomposable modules,
in terms of the matrices of the original Kac module, rather than limit our analysis
to their existence, classification, or the calculation of their characters. 

\section*{Acknowledgments}
The authors heartily thank the organizers of the G34 conference
\paragraph{Author contributions}
JTM, PDJ and JG developed the work on the type 1 superalgebras, MG contributed the generalization to quasi-reductive superalgebras presented in the appendix.
\paragraph{Funding information}
JTM was supported by the Intramural Research Program of the National Library of Medicine, National Institute of Health. MG was supported by ISF 1957/21 grant.

\begin{appendix}

\section{Appendix: Generalization to quasi-reductive Lie superalgebras}
\label{sec:appendix}
\begin{center}
  Contributed by Maria Gorelik.
  \end {center}

A finite-dimensional 
Lie superalgebra  $\fg=\fg_{\ol{0}}\oplus \fg_{\ol{1}}$ is {\em quasi-reductive}  if  $\fg_{\ol{0}}$  is a reductive Lie algebra and $\fg_{\ol{1}}$ is a semisimple $\fg_0$-module. Quasi-reductive Lie superalgebras were introduced in~\cite{serganova2011quasireductive}, \cite{M}. Simple quasi-reductive Lie superalgebras are classical Lie superalgebras in Kac's classification (see~\cite{Kac0}).   For other examples and partial classification of quasi-reductive Lie superalgebras, see ~\cite{serganova2011quasireductive}, \cite{M}, and \cite{IO}.

 As above, the base field is $\CC$ and
 $\fg$ is a quasi-reductive Lie superalgebra with an even Cartan subalgebra $\fh$. 
This means that $\fh$ is a Cartan subalgebra of the reductive Lie algebra
$\fg_{\ol{0}}$ and that $\fg^{\fh}=\fh$.  The only simple quasi-reductive Lie superalgebras
which do not satisfy this assumption are $Q$-type superalgebras.
We  denote by  $\fh'$  the center of the reductive Lie algebra $\fg_{\ol{0}}$;  one has
$$\fh=\fh'\times \fh''\ \text{ where } \fh'':=[\fg_{\ol{0}},\fg_{\ol{0}}]\cap \fh.$$ 
We
identify $(\fh')^*$ with  the subspace $(\fh'')^{\perp}=\{\nu\in\fh^*|\ \nu(h)=0\ \ \text{ for  any } h\in \fh''\}$.
One has
 $$\fg_{\ol{0}}=[\fg_{\ol{0}},\fg_{\ol{0}}]\times \fh'.$$
 
For an $\fh$-module $N$ we denote by $N_{\nu}$ the generalized weight space corresponding to
$\nu\in\fh^*$:
$$N_{\nu}=\{v\in N|\ \forall h\in\fh\ \ (h-\nu(h))^s v=0 \text{ for } s>>0\}.$$
All modules in this section are assumed to be
 {\em locally finite over $\fh$ with generalized finite-dimensional weight spaces}: this means that 
$N=\oplus_{\nu} N_{\nu}$ and $\dim N_{\nu}<\infty$
for all $\nu\in\fh^*$.  We set
$$\ch N:=\sum_{\nu} \dim N_{\nu} e^{\nu}.$$

A  quasi-reductive Lie superalgebra is of  {\em type I}
if $\fg=\fg_{-1}\oplus \fg_0\oplus \fg_1$ is a $\mathbb{Z}$-graded superalgebra. In this case
 $\fg_{\ol{0}}=\fg_0$  is a reductive Lie algebra,
 $\fg_{\pm 1}$ are odd commutative subalgebras of $\fg$
and $\fh$ acts diagonally on $\fg_{\pm 1}$. Examples of quasi-reductive Lie superalgebra of  type I include $\fgl(m|n)$, $\mathfrak{osp}(2|2n)$, 
$\mathfrak{p}_n$
and others.

\subsection{Self-extensions of highest weight modules}
 Let $M$ be a module with the  highest weight $\lambda$
(i.e. $M$ is a quotient of $M(\lambda)$), and let $v_{\lambda}\in M$ be 
the highest weight vector, i.e. the image of the canonical generator
of $M(\lambda)$. Introduce the natural map 
$$\Upsilon_{M}:\Ext^1_{\fg}(M,M)\to\fh^*$$
as follows. Let $0\to M\to^{\phi_1} N\to^{\phi_2} M\to 0$ be 
an exact sequence. Let 
$v:=\phi_1(v_{\lambda})$ and fix $v'\in N_{\lambda}$ such 
that $\phi_2(v')=v_{\lambda}$. Observe that $v,v'$
is a basis of $N_{\lambda}$ and so there exists $\mu\in\fh^*$ such that
 for any $h\in\fh$ one has $h(v')=\lambda(h)v'+\mu(h)v$ 
(i.e., the representation $\fh\to\End(N_{\lambda})$
is  $h\mapsto \begin{pmatrix} \lambda(h) & \mu(h)\\ 0 &\lambda(h)
\end{pmatrix}$). The map $\Upsilon_{M}$ assigns $\mu$ to the exact sequence. 
It is easy to see that $\Upsilon_{M}:\Ext^1(M,M)\to \fh^*$ is injective.

Notice that if $0\to M\to N_1\to M\to 0$ 
and $0\to M\to N_2\to M\to 0$ are two exact sequences then
\begin{equation}\label{cc}
N_1\cong N_2\ \Longleftrightarrow\ 
\Upsilon_{M}(N_1)=c\Upsilon_{M}(N_2)\ \text{ for some }
c\in\mathbb{C}\setminus\{0\}.\end{equation}
If $N$ is an extension of $M$ by $M$ (i.e., $N/M\cong M$)
we denote by $\Upsilon_{M}(N)$ the corresponding one-dimensional subspace
of $\fh^*$, i.e. $\Upsilon_{M}(N)=\mathbb{C}\mu$, where
$\mu$ is the image of the exact sequence

$0\to M\to N\to M\to 0$. 

\subsubsection{}\label{findim}
Let $M$ be a finite-dimensional highest weight module.
Since $\fg_0$ is reductive, the algebra $\fh''$ acts diagonally on
 any finite-dimensional $\fg$-module. Therefore the image of $\Upsilon_M$ 
annihilates $\fh''$, so lies in $(\fh')^*$. In particular, for
 the image of $\Upsilon_M$ is zero and
$\Ext^1(M, M)=0$ if $\fh'=0$. In other words,  the
finite-dimensional highest weight modules
do not admit non-splitting self-extensions if $\fg_{\ol{0}}$ is semisimple
(for instance, if $\fg$ is a basic classical 
Lie superalgebra of type II).

\subsection{Kac modules}
Let $\fg=\fg_{-1}\oplus \fg_0\oplus \fg_1$ be a quasi-reductive superalgebra of type I.

The following useful construction appears in several papers including~\cite{ChenMazorchuk2021}.
For a given $\fg_0$-module $M$, we may extend $M$ trivially to a
$\fg_0+\fg_1$-module and introduce the Kac module 
$$K(M):=\Ind^{\fg}_{\fg_0+\fg_1} M.$$
This defines an exact functor $K: \fg_0-Mod\to \fg-Mod$
 which is  called {\em Kac functor}.   
 It is easy to see that $K(M)$ is indecomposable 
 if and only if $M$ is indecomposable. 
 
 As $\fg_0$-module we have $K(M)\cong M\otimes \Lambda \fg_{-1}$.  Since  $\Lambda \fg_{-1}$
 is a finite-dimensional module with a diagonal action of $\fh$, $M$ is finite-dimensional
 (resp.,  diagonal $\fh$-module) if and only if $K(M)$ is finite-dimensional
 (resp.,  diagonal $\fh$-module) . Moreover,  $M$ is a locally finite $\fh$-module
 with generalized finite-dimensional weight spaces if and only if  $K(M)$ is such a module.
 
 \subsubsection{Self extensions of Kac modules}\label{const}
 Take $\nu\in (\fh')^*$ and let 
 $J_n(\nu)$ be the $(n|0)$-dimensional indecomposable $\fh$-module  
 spanned by $v_1,\ldots,v_n$ with the action  $hv_i=\nu(h)v_{i+1})$, $hv_n=0$.
 ($h$ acts on  $V_2(\nu)$  $\begin{pmatrix} 0& 0\\
   \nu(h) & 0   \end{pmatrix}$). Observe that $hJ_n(\nu)=0$ for all $h\in\fh''$.
 We view $J_n(\nu)$ as $\fg_0$-module with the zero action of $[\fg_0,\fg_0]$.
 
  For any $\fg_0$-module $L$ the product
 $J_n(\nu)\otimes L$ is an indecomposable $\fg_0$-module which admits a filtration of length $n$ with  the factors  isomorphic to $L$. Thus 
  $K(L\otimes J_n(\nu))$ 
 is an indecomposable $\fg$-module  which admits a filtration of length $n$ with 
 the factors  isomorphic to the Kac module $K(L)$; we denote this module by $K(L;n;\nu)$. 
 In particular, $K(L;2;\nu)$ is a self-extension of the Kac module $K(L)$. 
 
 \subsubsection{}
 Let $L$ be a finite-dimensional highest weight $\fg_0$-module. Then $K(L)$ is
 a finite-dimensional highest weight $\fg$-module
 By~\ref{findim},  the image of $\Upsilon_{K(L)}$ lies in  
$(\fh')^*$. Using the above construction  we obtain that the the image of $\Upsilon_M$ 
is equal to $(\fh')^*$.
 
 \subsubsection{}
 \begin{lem}{}
 If $L$ is a $\fg_0$-module, where $\fh$ acts locally finitely with finite-dimensional generalized weight spaces, then  
 $K(L;n;\nu)\cong K(L;n;\mu)$ if and only if 
 $\nu\in \mathbb{C}^*\mu$.
 \end{lem}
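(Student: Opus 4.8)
The plan is to reduce the statement about Kac modules to the corresponding statement about the $\fg_0$-modules $J_n(\nu)\otimes L$, using the exactness and faithfulness properties of the Kac functor recorded in the preceding subsection. First I would observe that if $\nu\in\CC^*\mu$, say $\nu=c\mu$ with $c\neq 0$, then the $\fh$-modules $J_n(\nu)$ and $J_n(\mu)$ are isomorphic via the rescaling $v_i\mapsto c^{\,i-1}v_i$ (this is the finite-dimensional analogue of the change of variables $Q$ in \eqref{eqn:4.2}); tensoring with the identity on $L$ gives $J_n(\nu)\otimes L\cong J_n(\mu)\otimes L$ as $\fg_0$-modules, and applying $K$ yields $K(L;n;\nu)\cong K(L;n;\mu)$. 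So the "if" direction is immediate.

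For the converse I would argue as follows. Suppose $K(L;n;\nu)\cong K(L;n;\mu)$ as $\fg$-modules. Restrict this isomorphism to $\fg_0$-modules: since $K(M)\cong M\otimes\Lambda\fg_{-1}$ as $\fg_0$-modules, and since $\Lambda\fg_{-1}$ is a fixed finite-dimensional $\fh$-diagonal module, I want to extract from $(J_n(\nu)\otimes L)\otimes\Lambda\fg_{-1}\cong (J_n(\mu)\otimes L)\otimes\Lambda\fg_{-1}$ the isomorphism $J_n(\nu)\otimes L\cong J_n(\mu)\otimes L$. The cleanest way is to look at the action of $\fh'$ (equivalently, of a chosen $z\in\fh'$ on which $\nu,\mu$ do not vanish — here one uses that $\nu,\mu\in(\fh')^*$ are nonzero, the degenerate case $\nu$ or $\mu=0$ being trivial since then both sides are semisimple). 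The operator $z-\nu(z)\cdot\mathrm{id}$ (generalized weight decomposition) is nilpotent of order exactly $n$ on $J_n(\nu)\otimes L$ — more precisely, on each generalized weight space; tensoring with the $\fh$-diagonal module $\Lambda\fg_{-1}$ does not change the Jordan block sizes of $z$ on any generalized weight space but merely shifts the weights. Hence an $\fh$-module isomorphism $K(L;n;\nu)\cong K(L;n;\mu)$ forces, on the generalized weight spaces, the Jordan structure of $z$ with respect to $J_n(\nu)\otimes L$ and $J_n(\mu)\otimes L$ to agree; then, tracking how $z$ acts (through the $J_n$ factor it acts by the nilpotent Jordan-type operator scaled by $\nu(z)$ resp.\ $\mu(z)$), one reads off that $\mu(z)/\nu(z)$ is the common scaling factor, and more generally $\mu=c\nu$ on all of $\fh'$ with this same $c$; since both annihilate $\fh''$, $\mu=c\nu$ on $\fh$, i.e.\ $\nu\in\CC^*\mu$.

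The main obstacle I anticipate is the bookkeeping in the converse direction: passing from an isomorphism of the tensored-up $\fg_0$-modules $K(L;n;\nu)\cong K(L;n;\mu)$ back to an isomorphism (or at least to the required proportionality of the defining cocycles) of the $J_n$-factors, in the generality where $L$ is only assumed locally finite over $\fh$ with finite-dimensional generalized weight spaces rather than finite-dimensional. One must be careful that $\End_{\fg_0}$ of these modules can be large, and that the $\fh$-action alone (not the full $[\fg_0,\fg_0]$-action) is what detects $\nu$. I would handle this by restricting attention to a single generalized $\fh$-weight space and using that the "$\Upsilon$"-type invariant — the off-diagonal $\fh$-cocycle on the generalized weight space of the highest (or any fixed) weight, exactly as in the map $\Upsilon_M$ defined above and in \eqref{cc} — is, up to the $\CC^*$ rescaling, a complete isomorphism invariant of these length-$n$ iterated self-extensions; the argument is then the evident generalization of \eqref{cc} from $n=2$ to arbitrary $n$, applied to $K(L)$ in place of $M$, combined with the first paragraph's rescaling to realize all proportional cocycles.
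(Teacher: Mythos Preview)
Your ``if'' direction is correct and is exactly what the paper does. Your reduction of the converse to the $\fh$-module isomorphism
\[
K(L)_{\lambda}\otimes J_n(\nu)\ \cong\ K(L)_{\lambda}\otimes J_n(\mu)
\]
on a single nonzero generalized weight space is also right and matches the paper. The gap is in what you do next.

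For a single $z\in\fh'$ with $\nu(z)\neq 0\neq\mu(z)$, the $\mathbb{C}z$-modules $J_n(\nu)$ and $J_n(\mu)$ are \emph{isomorphic}: both are a single nilpotent Jordan block of size $n$ (eigenvalue $0$, since by definition $h v_i=\nu(h)v_{i+1}$). Hence the Jordan structure of $z$ on $K(L)_{\lambda}\otimes J_n(\nu)$ and on $K(L)_{\lambda}\otimes J_n(\mu)$ is identical, and there is no ratio $\mu(z)/\nu(z)$ to ``read off''. (Relatedly, your phrase ``$z-\nu(z)\cdot\mathrm{id}$'' suggests you are treating $\nu$ as an eigenvalue of $J_n(\nu)$; it is not --- the generalized weight of $J_n(\nu)$ is $0$.) Your fallback via the $\Upsilon$-invariant also does not apply in this generality: $\Upsilon_M$ was defined only for highest weight modules, and here $L$, hence $K(L)$, is not assumed to have a highest weight.

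The paper supplies the missing idea: if $\nu\notin\mathbb{C}^*\mu$ then (after possibly swapping $\nu$ and $\mu$) one can choose $h\in\fh$ with $\nu(h)=0\neq\mu(h)$. For this $h$, the action on $J_n(\nu)$ is zero while on $J_n(\mu)$ it is a size-$n$ nilpotent Jordan block. Using that tensoring two $\mathbb{C}h$-modules with minimal polynomials $(h-c_1)^{k_1}$ and $(h-c_2)^{k_2}$ gives minimal polynomial of degree $k_1+k_2-1$ (the paper writes $k_1+k_2$, but only the strict increase matters), the minimal polynomial of $h$ on $K(L)_{\lambda}\otimes J_n(\nu)$ has degree $k$ while on $K(L)_{\lambda}\otimes J_n(\mu)$ it has degree strictly larger than $k$. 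This contradicts the assumed isomorphism. So the crux is not to compare Jordan blocks for a generic $z$, but to pick $h$ in $\ker\nu\setminus\ker\mu$ and compare minimal-polynomial \emph{degrees}.
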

 \begin{proof}
 If $\nu\in \mathbb{C}^*\mu$, then
$J_n(\nu)\cong J_n(\mu)$ and thus $K(L;n;\nu)\cong K(L;n;\mu)$.

Conversely,   assume that $K(L;n;\nu)\cong K(L;n;\mu)$.  
As $\fg_0$-modules 
$$K(L;n;\nu)\cong L\otimes J_n(\nu)\otimes\Lambda \fg_{-1}\cong K(L)\otimes J_n(\nu).$$
 Thus for any $\lambda\in\fh^*$ we have
$K(L;n;\nu)_{\lambda}\cong K(L)_{\lambda}\otimes J_n(\nu)$.
Take $\lambda$ such that $K(L)_{\lambda}\not=0$.
Then $K(L;n;\nu)\cong K(L;n;\mu)$ implies
\begin{equation}\label{ashmodules}
K(L)_{\lambda}\otimes J_n(\nu)\cong K(L)_{\lambda}\otimes J_n(\mu)\ \ \text{ 
as $\fh$-modules.}
\end{equation}

We  will use the following fact: if $V_1,V_2$ are two modules
over a one-dimensional Lie algebra $\mathbb{C}x$ with the minimal polynomials
of $x$ on $V_i$ equal to $(x-c_i)^k_i$, then  the minimal polynomial
of $x$ on $V_1\otimes V_2$ equals to $(x-(c_1+c_2))^{k_1+k_2}$. 

Assume that  $\nu\not\in \mathbb{C}^*\mu$. 
Then there exists $h\in\fh$ such that $\nu(h)=0\not=\mu(h)$.
Recall that  $h-\lambda(h)$ acts nilpotently on $K(L)_{\lambda}$, so
the minimal polynomial of $h$ on $K(L)_{\lambda}$ takes the form $(h-\lambda(h))^k$.
By above, the minimal polynomial of $h$ on $K(L)_{\lambda}\otimes J_n(\nu)$ (resp., on
$K(L)_{\lambda}\otimes J_n(\mu)$)
is $(h-\lambda(h))^{k}$ (resp., $(h-\lambda(h))^{k+n}$).  Hence~(\ref{ashmodules})
does not hold: a contradiction.
\end{proof}

 \subsection{Action of the Heisenberg superalgebra}\label{naiveHeisenberg}
Let $\fg$ be a quasi-reductive superalgebra of type I. Retain notation of~\ref{const}.
Let $\iota':\fg_0\to \fh'$ be the projection along the decomposition
$\fg_0=[\fg_0,\fg_0]\times\fh'$. We endow the superspace
$H=\fg_{-1}\oplus \fh'\oplus \fg_1$ by the structure of Lie superalgebra
with the bracket $[-,-]_n$ given by
$$[\fg_1,\fg_1]_n:=[\fg_{-1}, \fg_{-1}]_n:=0,\ \ [a_-,a_+]_n:=\iota'([a_-,a_+]),\ \ 
[h,a_{\pm}]_n:=0$$
for all $a_{\pm}\in\fg_{\pm 1}$ and $h\in\fh'$. Observe that $H$ is a quasi-reductive
superalgebra of type I  (in fact, $H$ is 
the direct product an odd Heisenberg superalgebra and a commutative superalgebra).
For an $\fh'$-module $M$ we denote by $K_H(M)$
the Kac module for $H$ constructed as in~\ref{const}.

Let $L$ be a $\fg_0$-module.
The construction \eqref{eqn:4.1} defines an action of $H$ on a self-extension
of a Kac module $K(L)$; we describe this action below.

\subsubsection{}
Fix $\mu\in(\fh')^*$. 
Consider a one-parameter family of $n$-dimensional $\fg_0$-modules $V_n(t\nu)$
constructed in~\ref{const} (for  $t\in\mathbb{R}$).
Then $K(L;n;t\nu)$ is a  one-parameter family of self-extensions
of $K(L)$: this self-extension is splitting if $t=0$; by~(\ref{cc}), for $t\not=0$ all these modules
are isomorphic. As a vector space 
$K(L;n;t\nu))$ is canonically isomorphic to 
$$V:=L\otimes J_n(\nu)\otimes\Lambda \fg_{-1}.$$
We let $\rho_t: \fg\to \End(V)$ be the representation corresponding to $K(L;n;t\nu)$.

It is easy to see that 
 $\rho_t(u)=\rho_0(u)$  for $u\in [\fg_0,\fg_0]+\fg_{-1}$ and that
for $u\in\fh'+\fg_1$ one has $\rho_t(u)=\rho_0(u)+t \rho'_t(u)$ for some $\rho'_t(u)\in \End(V)$.

\subsubsection{}
\begin{lem}{lemHmod}
Define a linear map $\phi: H\to \fgl(V)$ by 
$$\phi(a_-):= \rho_0(a_-),\ \ \phi(a_+):=  \rho'_t(a_+),\ \ 
\phi(h):=\rho'_t(h)$$ 
for $a_{\pm}\in\fg_{\pm}$ and $h\in\fh'$. Then $\phi$ is 
the  $H$-representation  isomorphic to the Kac module
$K_H(L',n,\nu)$ where $L\cong L'$ as a superspace and the action of
$H_0=\fh'$ on $L'$ is trivial.
\end{lem}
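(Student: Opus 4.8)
The plan is to verify first that $\phi$ is a homomorphism of Lie superalgebras $H\to\fgl(V)$, and then to identify the resulting $H$-module with $K_H(L'\otimes J_n(\nu))$.

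For the first part, the key point is that the family $\rho_t$ is \emph{affine-linear} in $t$: the only occurrence of $t$ is in the Jordan twist $J_n(t\nu)$, on which $h\in\fh'$ acts as $t\nu(h)$ times a fixed nilpotent $N$, and a generator of $\fg_1$ pushed through an element of $K(L\otimes J_n(t\nu))$ contracts against at most one generator of $\fg_{-1}$, hence picks up at most one factor of $t$. Thus on the fixed space $V=L\otimes J_n(\nu)\otimes\Lambda\fg_{-1}$ we may write $\rho_t(u)=\rho_0(u)+t\,\rho'(u)$ with $\rho'(u)$ independent of $t$ and $\rho'(u)=0$ for $u\in[\fg_0,\fg_0]+\fg_{-1}$, so that $\phi(a_-)=\rho_0(a_-)$, $\phi(a_+)=\rho'(a_+)$, $\phi(h)=\rho'(h)$. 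Expanding each relation $\rho_t([x,y])=[\rho_t(x),\rho_t(y)]$ in powers of $t$ now produces all brackets of $H$: for the pairs $(\fh',\fh'),(\fh',\fg_1),(\fg_1,\fg_1)$ both arguments have nontrivial $\rho'$ while $[x,y]_n=0$, so the $t^2$-coefficient gives $[\phi(a_+),\phi(a'_+)]=[\phi(h),\phi(h')]=[\phi(h),\phi(a_+)]=0$; for $(\fh',\fg_{-1})$ the $t^1$-coefficient gives $[\phi(h),\phi(a_-)]=0=\phi([h,a_-]_n)$; for $(\fg_{-1},\fg_1)$, since $[a_-,a_+]\in\fg_0$, the $t^1$-coefficient gives $[\phi(a_-),\phi(a_+)]=\rho'\!\big(\iota'([a_-,a_+])\big)=\phi\big([a_-,a_+]_n\big)$, the $[\fg_0,\fg_0]$-part of $[a_-,a_+]$ entering only at order $t^0$; and $[\phi(a_-),\phi(a'_-)]=\rho_0(\{a_-,a'_-\})=0=\phi([a_-,a'_-]_n)$. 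Hence $\phi$ is an $H$-representation.

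For the second part, let $W:=1\otimes L\otimes J_n(\nu)\subset V$ be the $\Lambda\fg_{-1}$-degree-zero subspace. Under $\rho_0\cong K(L)^{\oplus n}$ it is the direct sum of the $n$ top copies of $L$, so $V$ is a free $\Lambda\fg_{-1}=U(H_{-1})$-module on $W$ with $\fg_{-1}$ acting by left multiplication — exactly as $H_{-1}$ acts in $K_H$. Since $W$ is the top of the $\fg$-Kac module $K(L\otimes J_n(t\nu))$ for every $t$, one has $\rho_t(\fg_1)W=0$, hence $\phi(\fg_1)W=0$; and $\rho_t(h)|_W$ is the $\fg_0$-action of $h$ on $L\otimes J_n(t\nu)$, so $\phi(h)|_W=\rho'(h)|_W=\id_L\otimes\nu(h)N$. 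Therefore $W$, as a module over $H_0+H_1=\fh'\oplus\fg_1$, is the trivial extension of the $\fh'$-module $L'\otimes J_n(\nu)$, with $\fh'$ acting trivially on $L'\cong L$. By the universal property of $\Ind^H_{H_0+H_1}$ this identification of $W$ with $L'\otimes J_n(\nu)$ extends to a unique $H$-module map $\Psi\colon K_H(L'\otimes J_n(\nu))=\Ind^H_{H_0+H_1}(L'\otimes J_n(\nu))\to V$ restricting to it on the generating subspace; being $\Lambda\fg_{-1}$-linear and an isomorphism on the free generators, $\Psi$ is an isomorphism. This is the asserted $\phi\cong K_H(L',n,\nu)$.

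The $t$-expansion bookkeeping is routine; the step that needs care is establishing that $\rho_t$ is genuinely affine-linear in $t$ with a $t$-independent $\rho'$ — this rests on the explicit shape of the Kac functor, namely that a $\fg_1$-generator contracts against at most one $\fg_{-1}$-generator, and it is what legitimises collecting coefficients of $t$. The other structural input is the identification of the top copy $W$ with the twisted module $L'\otimes J_n(\nu)$; once that is in hand, the freeness of $V$ over $\Lambda\fg_{-1}$ inherited from $\rho_0$ makes $\Psi$ an isomorphism automatically.
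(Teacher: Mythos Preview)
Your proof is correct and follows essentially the same approach as the paper's: both verify that $\phi$ is a Lie superalgebra homomorphism by expanding $\rho_t([x,y])=[\rho_t(x),\rho_t(y)]$ in powers of $t$ and matching coefficients, and then identify the resulting $H$-module with $K_H(L',n,\nu)$ by checking that the top subspace $L\otimes J_n(\nu)$ is annihilated by $\fg_1$, carries the Jordan-twisted $\fh'$-action, and freely generates $V$ over $\Lambda\fg_{-1}$. Your exposition is slightly more explicit than the paper's in spelling out why $\rho_t$ is affine-linear in $t$ (the paper simply asserts $\partial^2\rho_t/\partial t^2=0$) and in invoking the universal property of induction, but the substance is the same.
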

\begin{proof}
Let us check that $\phi$ is a homomorphism of Lie superalgebras.
For $a_{-},b_{-}\in\fg_{-}$ one has $[a_-,b_-]_n=[a_-,b_-]=0$ and
$$[\phi(a_-),\phi(b_-)]=\rho_0([a_-,b_-])=0=\phi([a_-,b_-]_n).$$

By above, $\frac{\partial^2 \rho_t(a)}{\partial^2 t}=0$
for any $a\in\fg$. Therefore for any $a,b\in \fg$ one has
$$0=\frac{\partial^2 \rho_t([a,b])}{\partial^2 t}=[\rho'_t(a),\rho'_t(b)].$$
Taking $a,b\in\fg_1+\fh'$ we get $0=[\phi(a),\phi(b)]=\phi([a,b]_n)$.

One has
$$[\rho_t(a),\rho'_t(b)]+[\rho'_t(a),\rho_t(b)]=\rho'_t ([a,b])$$
Using this formula for $a=a_-$ and $b=a_+$ we obtain
$$[\phi(a_-),\phi(a_+)]=[\rho_t(a_-),\rho'_t(a_+)]=\rho'_t([a_-,a_+])=
\rho'_t (\iota'[a_-,a_+])=\phi ([a_-,a_+]_n).$$ 
Finally, taking $h\in\fh'$ and $a_-\in\fg_-$ we get 
$$[\phi(h),\phi(a_-)]=[\rho'_t(h),\rho_t(a_-)]=\rho'_t ([h,a_-])=0$$
so $[\phi(h),\phi(a_-)]=\phi([h,a_-]_n)$. Hence $\phi$ is 
a homomorphism, so $\phi$ defines a representation of $H$. Denote the corresponding $H$-module
by $N$.
Let us check that the linear isomorphism $L\iso L'$ induces
the $H$-module isomorphism $N\iso  K_H(L',n,\nu)$.
 
 Since $K(L,n,t\nu)$ is a free $\fg_{-1}$-module generated by the subspace $L\otimes V_n(t\nu)$, $N$ is a free $\fg_{-1}$-module generated by the subspace 
 $L'\otimes V_n(t\nu)$. For any $v\in L `\otimes V_n(t\nu)$ one has
  $\rho_t(a_+)(v)=0$, so $\phi(a_+)(v)=0$.  Take $h\in\fh'$.  Let $v_1,\ldots,v_n$ be the standard
   basis of $J_n(\nu)$ (see~\ref{const}).
 For $w\in L$ we have
 $$\rho_t(h)(w\otimes v_i)=\rho_0(h)(w)\otimes v_i+ t\nu(h)  w\otimes v_{i+1}.$$
 so $\phi(h)(w\otimes v_i)=\nu(h) w\otimes v_{i+1}$.
 Hence $N=K_H(L',n,\nu)$ as required.
 \end{proof}

\subsection{Another construction of the action of the Heisenberg superalgebra}
A natural question is to find a more ``natural'' construction  for Heisenberg superalgebra and its action. This can be done as follows.

\subsubsection{}
 Let $\ft$ be any Lie superalgebra. We introduce the increasing filtration 
by $\cF^{0}(\ft)=0$, $\cF^{1}(\ft):=\ft_{\ol{1}}$ and $\cF^{2}(\ft):=\ft$.

The associated graded Lie superalgebra $\tilde{H}:=\gr_{\cF}(\ft)$ is naturally isomorphic to $\ft$ as a vector superspace;
denoting this linear isomorphism by $\iota:\ft\to \tilde{H}$ we obtain the following formulae for
the bracket on $\tilde{H}$:
$$[\tilde{H}_{\ol{0}},\tilde{H}]=0,\ \ [a,b]:=\iota([\iota^{-1}(a),\iota^{-1}(b)]) \text{ if } a,b\in  \tilde{H}_{\ol{1}}.$$
If $\dim \ft<\infty$, then $\tilde{H}$ is quasi-reductive. If $\ft$ is $\mathbb{Z}$-graded and finite-dimensional, then
$\tilde{H}$ is quasi-reductive of type I ($\tilde{H}$ is 
the direct product an odd Heisenberg superalgebra and a commutative superalgebra).

If $N$ is a $\ft$-module generated by a subspace $N'$ we can define a compatible increasing filtration 
on $N$ by setting $\cF^0(N)=N'$ and $\cF^i(N)=\cF^i(\cU(\ft))N'$. The associated graded module
$\gr_{\cF}(N)$ has a structure of $\tilde{H}$-module.

\subsubsection{Application to~\ref{naiveHeisenberg}}
Retain notation of~\ref{naiveHeisenberg}.
Let $\ft:=\fg$ be  quasi-reductive of type I.   Identify  $\tilde{H}_{\ol{0}}$ with $\fg_{\ol{0}}=\fg_0$. Since   $\tilde{H}_{\ol{0}}$
lies in the center of  $\tilde{H}$, any subspace of $\fg_0$ is an ideal of $\tilde{H}$.
It is easy to see that $\tilde{H}/[\fg_0,\fg_0]$ is isomorphic to $H$ constructed in~\ref{naiveHeisenberg}. 

Set $N:=K(L,n,\nu)$.
Fix $h\in\fh'$ such that $\nu(h)=1$. Recall that the $h$ acts on  $J_n(\nu)$ by a Jordan cell: $J_n(\nu)$ is spanned by $v_1, hv_2, h^2v_1,\ldots, h^{n-1} v$.
$L\otimes J_n(\nu)$ is spanned by $L\otimes v_1, h(L\otimes v_1),\ldots, h^{n-1}(L\otimes v_1)$,  so
 $N':=L\otimes v_1$ generates  $N$ over $\fg$.  Define the increasing filtration 
on $N$ as above.  The associated graded module
$\gr_{\cF}(N)$ is a $\tilde{H}$-module.  We have 
$[\fg_0,\fg_0](\cF^i(N))=\cF^i(N)$ for each $i$, so 
$[\fg_0,\fg_0]$ annihilates $\gr_{\cF}(N)$. Hence
$\gr_{\cF}(N)$ is an $H$-module. It is not hard to see that 
this module is isomorphic to the $H$-module constructed in~\Lem{lemHmod}.

Conclusion: the matryoshka N-replication of the Kac module $K(L)$
has the structure of a module over a Heisenberg superalgebra.

\end{appendix}


\bibliography{slmn}
\nolinenumbers

\end{document}